\newcommand{\Z}{\mathbb{Z}}
\newcommand{\N}{\mathbb{N}}
\newcommand{\R}{\mathbb{R}}
\newcommand{\ess}{\operatorname{\mathrm{ess}}}
\newtheorem{Theo}{Theorem}
\newtheorem{Def}{Definition}
\newtheorem{Cor}{Corollary}
\newtheorem{Prop}{Proposition}
\begin{document}
\title[Weakly multiplicative functions]{Weakly multiplicative arithmetic functions and the normal growth of groups}
\author[J.-C. Schlage-Puchta]{Jan-Christoph Schlage-Puchta}
\begin{abstract}
We show that an arithmetic function which satisfies some weak multiplicativity properties and in addition has a non-decreasing or $\log$-uniformly continuous normal order is close to a function of the form $n\mapsto n^c$. As an application we show that a finitely generated, residually finite, infinite group, whose normal growth has a non-decreasing or a $\log$-uniformly continuous normal order is isomorphic to $(\Z, +)$.
\end{abstract}
\maketitle
\section{Introduction and results}
A function $f:\N\rightarrow\R$ is called multiplicative, if for all coprime positive integers $n, m$ we have $f(nm)=f(n)f(m)$. P. Erd\H os \cite{Erdos} showed that a non-decreasing multiplicative function $f$ is of the form $f(n)=n^c$ for some $c\geq 0$. Birch \cite{Birch} showed that the same conclusion holds, if we assume that $f$ has a non-decreasing normal order (see Definition~\ref{def:analytic}). Following these results there has been a lot of activity dealing with similar statements for other regularity properties of multiplicative functions; however, the question whether ``multiplicative" can be replaced by a weaker statement has received much less attention. In \cite{Diplom} it was shown that a 
function $f$ is of the form $f(n)=n^c$ for some $c$, provided that $f$ has the following property: $f$ is monotonic, non-vanishing, and for all $n\in\N$ and all $\epsilon>0$ there is some $x_0>0$ such that for all $x>x_0$ the interval $[x, (1+\epsilon)x]$ contains some $m$ with $f(nm)=f(n)f(m)$. This statement was motivated by the fact that, if $G$ is a finitely generated group and if $f(n)$ denotes the number of normal subgroups of index $n$ in $G$, then $f$ satisfies some weak multiplicativity properties. In this note we will deal in a similar way with functions having a smooth normal order.

\begin{Def}
{\em A function $f:\N\rightarrow[0, \infty)$ is} weakly super-multiplicative{\em , if for all $n\in\N$ and all $\epsilon>0$ there exists some $x_0>0$ and some $\delta>0$ such that for all $x>x_0$ we have}
\[
\#\{m\in[x, (1+\epsilon)x]: f(nm)\geq (1-\epsilon)f(n)f(m)\} \geq \delta x.
\]
\end{Def}

Note that being weakly super-multiplicative is a very weak property. Clearly multiplicative functions are weakly super-multiplicative. A more striking example is the fact that if the values of $f(n)$ are chosen as the values of independent identically distributed random variables with values in $[0, 1]$, then $f$ is almost surely weakly super-multiplicative. To see this note that, as $f(m)\leq 1$ for all $m$, we have for every fixed $n$ that 
\[
\{m:f(nm)\geq f(n)f(m)\}\subseteq\{m:f(nm)\geq f(n)\}.
\]
Our claim now follows from the fact that for each $m$ the event $f(nm)\geq f(n)$ has positive probability.

\begin{Def}
\label{def:analytic}
\begin{enumerate}
\item {\em A function $f:\N\rightarrow[0, \infty)$ has} normal order $g${\em ,  if for all $\epsilon>0$ the set $\{n: |f(n)-g(n)|\geq \epsilon g(n)\}$ has upper density 0.}
\item {\em A function $g:(0, \infty)\rightarrow (0, \infty)$ is} $\log$-uniformly continuous{\em, if for every $\epsilon>0$ there exists some $\delta>0$ such that for all $x, y>0$ with $\left|\frac{x}{y}-1\right|<\delta$ we have $\left|\frac{g(x)}{g(y)}-1\right|<\epsilon$. }
\item {\em The} essential limit {\em $\lim\ess a_n$ of a sequence $(a_n)$ exists and is equal to $a$, if for all $\epsilon>0$ the set $\{n:|a_n-a|>\epsilon\}$ has density 0. We say the essential limit is $\infty$, if for all $M\in\R$ the set $\{n: a_n<M\}$ has density 0.}
\end{enumerate}
\end{Def}
Note that some authors include the monotonicity of $g$ in the definition of a normal order, however, we do not do so here.
With these notations we state the following.

\begin{Theo}
\label{thm:main}
Let $f$ be a weakly super-multiplicative function, which has a strictly positive normal order $g$, where $g$ is either non-decreasing or $\log$-uniformly continuous. Then
\[
\sup\frac{\log f(n)}{\log n} = \lim\ess\frac{\log f(n)}{\log n}.
\]
\end{Theo}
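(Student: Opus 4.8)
The plan is to show that $\frac{\log f(n)}{\log n}\to C:=\sup_{n\ge2}\frac{\log f(n)}{\log n}$ along a set of density $1$ (reading ``$\to\infty$'' when $C=\infty$). Since the left-hand side of the asserted identity is $C$ by definition, and since convergence to $C$ along a density-$1$ set immediately gives $\lim\ess\frac{\log f(n)}{\log n}=C$, this is exactly the theorem. Put $\psi(t)=\log g(e^t)$. By the normal-order hypothesis there is a density-$1$ set $E$ with $f(n)=(1+o(1))g(n)$ on $E$, hence $\frac{\log f(n)}{\log n}=\frac{\log g(n)}{\log n}+o(1)$ on $E$; as $f(n)\le n^{C}$ for all $n$, this already yields $\frac{\log g(n)}{\log n}\le C+o(1)$ on $E$. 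So it suffices to prove the lower bound $\liminf_{n\to\infty}\frac{\log g(n)}{\log n}\ge C$. (Incidentally, weak super-multiplicativity applied with $n=1$ forces $f(1)\le1$, so $C$ is unaffected by omitting $n=1$ and witnesses $n_0\ge2$ may be chosen freely.)

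To prove the lower bound, fix $\epsilon>0$ (or, when $C=\infty$, a large $M$) and pick $n_0\ge2$ with $f(n_0)\ge n_0^{C-\epsilon/4}$ (resp. $f(n_0)\ge n_0^{M}$). Apply weak super-multiplicativity to $n_0$ with a small parameter $\eta$ to be fixed later: for all large $x$ there are at least $\delta x$ integers $m\in[x,(1+\eta)x]$ with $f(n_0m)\ge(1-\eta)f(n_0)f(m)$. A density-$0$ set meets $[x,(1+\eta)x]$ in fewer than $\frac\delta2 x$ integers once $x$ is large, so after discarding the $m$ for which $f=(1+o(1))g$ fails at $m$ or at $n_0m$, at least one admissible $m$ survives in every window $[x,(1+\eta)x]$; for such an $m$ one gets $g(n_0m)\ge\kappa\,g(m)$ with $\kappa=\frac{(1-\eta)^2}{1+\eta}f(n_0)\ge\frac{(1-\eta)^2}{1+\eta}n_0^{C-\epsilon/4}$ (resp. $\ge\frac{(1-\eta)^2}{1+\eta}n_0^{M}$). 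Equivalently, every logarithmic window $[\log x,\log x+\log(1+\eta)]$ contains a point $s$ with $\psi(s+\log n_0)\ge\psi(s)+\log\kappa$.

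The key step is to upgrade this to a clean inequality $\psi(t+a)\ge\psi(t)+b$ valid for all large $t$, with $b/a$ as close to $C$ as we please. If $g$ is non-decreasing this holds with $a=\log\big(n_0(1+\eta)\big)$, $b=\log\kappa$, because $g\big(n_0(1+\eta)x\big)\ge g(n_0m)\ge\kappa g(m)\ge\kappa g(x)$ for large $x$. If $g$ is $\log$-uniformly continuous, then $t\mapsto\psi(t)$ is uniformly continuous on $(0,\infty)$; since the window $[\log x,\log x+\log(1+\eta)]$ has length only $\log(1+\eta)$, for any prescribed $\epsilon_1>0$ one may take $\eta$ small enough that the inequality at the single point $s$ spreads to all $t$ in the window at the cost of replacing $\log\kappa$ by $\log\kappa-2\epsilon_1$; as $x$ ranges over a half-line these windows cover a half-line of $t$'s, giving $\psi(t+\log n_0)\ge\psi(t)+(\log\kappa-2\epsilon_1)$ for all large $t$. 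In either case, iterating yields $\psi(t+ka)\ge\psi(t)+kb$ for $t\ge t_1$ and all $k$; since $\psi$ is monotone (resp. continuous), hence bounded on the compact window $[t_1,t_1+a]$, this forces $\liminf_{T\to\infty}\psi(T)/T\ge b/a$, i.e. $\liminf_{x\to\infty}\frac{\log g(x)}{\log x}\ge b/a$. As $\eta\to0$ (and, in the $\log$-continuous case, $\epsilon_1\to0$) one has $b/a\to C-\epsilon/4$ (resp. $\to M$), so a suitable choice of parameters gives $\liminf_{n}\frac{\log g(n)}{\log n}\ge C-\epsilon$ (resp. $\ge M/2$); letting $\epsilon\to0$ (resp. $M\to\infty$) gives $\liminf_n\frac{\log g(n)}{\log n}\ge C$. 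Combined with the upper bound from the first paragraph, this shows $\frac{\log f(n)}{\log n}\to C$ on $E$, and the theorem follows.

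The density bookkeeping in the window, the passage $f=(1+o(1))g\Rightarrow\log f=\log g+o(1)$, the equivalence of $\log$-uniform continuity of $g$ with ordinary uniform continuity of $t\mapsto\log g(e^t)$, and the elementary computation of $\lim b/a$ as $\eta\to0$ are all routine. The genuinely delicate point is the upgrade in the third paragraph when $g$ is only $\log$-uniformly continuous: weak super-multiplicativity produces a good $m$ \emph{somewhere} in each window with no control over its location, and without monotonicity one cannot compare $g$ at an arbitrary point of the window to $g$ at that good point; the resolution is to make the logarithmic window width $\log(1+\eta)$ small relative to the modulus of continuity of $\psi$, and getting this interplay of scales right is where the work lies.
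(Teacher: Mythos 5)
Your proof is correct and follows essentially the same route as the paper: you combine weak super-multiplicativity with the normal order to extract, in each multiplicative window, a point where $g$ satisfies the growth inequality $g(\text{shifted point})\geq \kappa\, g(\text{point})$, upgrade this to a uniform shift inequality using monotonicity (resp.\ $\log$-uniform continuity), iterate to get $\liminf \log g(y)/\log y\geq\sup\log f(n)/\log n$, and close with the trivial upper bound from the normal order. The passage to logarithmic coordinates $\psi(t)=\log g(e^t)$ and the use of the shift $\log n_0$ rather than $\log\big(n_0(1+\gamma)\big)$ in the continuous case are only cosmetic differences.
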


In particular $f(n)$ either tends super-polynomially to $\infty$, or it approaches $n^c$ for some constant $c$ from below. Note that a more precise statement is impossible, since for any function $\delta(n)$ which decreases monotonically to 0, the function $f(n)=n^{1-\delta(n)}$ is both strictly increasing and super-multiplicative, i.e. we have $f(nm)\geq f(n)f(m)$ for all $n, m$. This example shows that even if in Theorem~\ref{thm:main} we replace ``non-decreasing normal order" by ``strictly increasing", and ``weakly supermultiplicative" by ``super-multiplicative", the convergence to the limit can still be arbitrarily slow.

As a first application we recover a strengthening of Birch's result.

\begin{Cor}
\label{Cor:Birch}
Let $f:\N\rightarrow(0, \infty)$ be a function such that both $f$ and $f^{-1}$ are weakly super-multiplicative. If $f$ has a normal order that is monotonic or $\log$-uniformly continuous, then there is some $c$ such that $f(n)=n^c$ holds for all $n$.
\end{Cor}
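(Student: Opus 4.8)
The plan is to extract from Theorem~\ref{thm:main} a power upper bound for $f$ and a matching lower bound --- the latter by applying the theorem again to $f^{-1}=1/f$ when possible, and otherwise by a separate bootstrapping argument --- and then to conclude $f(n)=n^{c}$. Observe first that the hypotheses are symmetric under $f\leftrightarrow f^{-1}$: both functions are weakly super-multiplicative by assumption, $f$ has normal order $g$ precisely when $f^{-1}$ has normal order $1/g$, the function $1/g$ is monotonic whenever $g$ is, and it is $\log$-uniformly continuous whenever $g$ is, since the condition in Definition~\ref{def:analytic}(2) is unchanged upon replacing $g$ by $1/g$. As the conclusion $f(n)=n^c$ is equivalent to $f^{-1}(n)=n^{-c}$, in the monotonic case we may assume, after swapping $f$ and $f^{-1}$ if necessary, that $g$ is non-decreasing.

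Applying Theorem~\ref{thm:main} to $f$ gives $c:=\sup_{n\ge 2}\frac{\log f(n)}{\log n}=\lim\ess\frac{\log f(n)}{\log n}$. I would first check that $c<\infty$. If $g$ is $\log$-uniformly continuous this is immediate, since then so is $1/g$, and Theorem~\ref{thm:main} applied to $f^{-1}$ gives $\sup_{n\ge 2}\frac{\log f^{-1}(n)}{\log n}=-c$, a real number, so $c$ cannot be $+\infty$. If instead $g$ is non-decreasing, one argues directly: were $c=\infty$, the definition of normal order together with the monotonicity of $g$ would force $g$ eventually to exceed every power of $x$, whereas weak super-multiplicativity of $f^{-1}$ with $n=2$, again combined with monotonicity, gives an inequality $g(2x)\le B\,g((1+\epsilon)x)$ for all large $x$, and iterating this along a geometric progression bounds $g$ by a fixed power of $x$, a contradiction. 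Once $c<\infty$ we obtain $f(n)\le n^{c}$ for all $n\ge 2$ and $f(n)\ge n^{c-\eta}$ outside a set of density $0$ for every $\eta>0$; since $g$ is a normal order of $f$ and is monotonic, resp.\ $\log$-uniformly continuous, this upgrades to $g(t)=t^{c+o(1)}$ for all $t$.

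It remains to prove the matching lower bound $f(n)\ge n^{c}$ for all $n\ge 2$. If $g$ is $\log$-uniformly continuous this follows from the second application of Theorem~\ref{thm:main}: with $c'=\sup_{n\ge 2}\frac{\log f^{-1}(n)}{\log n}=-c$ one has $1/f(n)=f^{-1}(n)\le n^{c'}=n^{-c}$. If $g$ is non-decreasing this is the crux of the argument, and the step I expect to be the main obstacle, since the reciprocal of a non-decreasing function is non-increasing and Theorem~\ref{thm:main} cannot simply be re-applied to $f^{-1}$. Here I would argue by contradiction: suppose $f(n_0)=\lambda n_0^{c}$ with $\lambda<1$ for some $n_0\ge 2$. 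Feeding weak super-multiplicativity of $f^{-1}$ at $n_0$ through the normal order (replacing the values of $f$ by those of $g$ on a density-$1$ set) and using the monotonicity of $g$, one gets $g(n_0 x)\le \lambda'\,n_0^{c}\,g((1+\epsilon)x)$ for all large $x$, where $\lambda'\to\lambda$ as the auxiliary parameters tend to $0$. Iterating this recursion along $x_k=\big(n_0/(1+\epsilon)\big)^{k}x_0$ and inserting $g(t)=t^{c+o(1)}$ forces, upon dividing by $k$ and letting $k\to\infty$, the inequality $(1+\epsilon)^{-c}\le\lambda'$; as $\lambda<1$ while $(1+\epsilon)^{-c}\to 1$, this fails once $\epsilon$ and the auxiliary parameters are small, so $f(n)=n^{c}$ for all $n\ge 2$. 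Finally, weak super-multiplicativity of $f$ at $n=1$ gives $f(1)\le 1$ and that of $f^{-1}$ gives $1/f(1)\le 1$, whence $f(1)=1=1^{c}$ and $f(n)=n^{c}$ holds for all $n\in\N$.
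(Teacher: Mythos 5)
Your proposal is correct, and in the $\log$-uniformly continuous case it is essentially the paper's argument: the paper reformulates the conclusion of Theorem~\ref{thm:main} as saying that either $\lim\ess\frac{\log f(n)}{\log n}=\infty$ or $f(n)\leq n^c$ for all $n$ with $f(n)=n^{c-\omega(n)}$, $\omega\to 0$, for almost all $n$; it applies this to both $f$ and $f^{-1}$ and compares the resulting sandwich $n^{c_1}\leq f(n)\leq n^{c_2}$, $n^{c_1+\omega_1(n)}=f(n)=n^{c_2-\omega_2(n)}$, to force $c_1=c_2$ and $\omega_1=\omega_2=0$. Where you genuinely diverge is the monotone case. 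The paper applies Theorem~\ref{thm:main} to $f^{-1}$ without comment, although the normal order of $f^{-1}$ is $1/g$, which is non-increasing when $g$ is non-decreasing, so the theorem as stated does not literally apply to $f^{-1}$; you identify exactly this obstacle and replace the second application of the theorem by a direct bootstrapping argument (weak super-multiplicativity of $f^{-1}$ at a fixed $n_0$, fed through the normal order and the monotonicity of $g$, gives $g(n_0x)\leq\lambda' n_0^{c}g((1+\epsilon)x)$; iterating along the geometric progression with ratio $n_0/(1+\epsilon)>1$ and comparing with $g(t)=t^{c+o(1)}$ yields $(1+\epsilon)^{-c}\leq\lambda'$, contradicting $\lambda<1$ for small $\epsilon$). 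This is the same mechanism that drives the proof of the theorem itself, run in the opposite direction, and it is a legitimate --- indeed necessary --- repair: the paper's one-line double application is clean only in the $\log$-uniformly continuous case (one can check that the theorem's proof does extend to non-increasing normal orders, which is presumably what the author had in mind, but that extension is nowhere stated). Your separate verification that $c<\infty$ and your treatment of $n=1$ via weak super-multiplicativity of $f$ and $f^{-1}$ at $n=1$ (a point the paper skips, and where $\log f(n)/\log n$ is undefined) are likewise correct; what each approach buys is brevity on the paper's side versus a proof that actually covers the full hypothesis ``monotonic'' on yours.
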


As a second application we prove the following.

\begin{Cor}
\label{Cor:normal}
Let $G$ be a finitely generated residually finite group, and let $f(n)$ be the number of normal subgroups of $G$ of index $n$. If $f$ has a strictly positive normal order that is monotonic or $\log$-uniformly continuous, then $G\cong(\Z, +)$.
\end{Cor}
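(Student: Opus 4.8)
The plan is to deduce Corollary~\ref{Cor:normal} from Theorem~\ref{thm:main}, using the group-theoretic content of $f$ together with residual finiteness. Throughout, $f(n)$ denotes the number of normal subgroups of $G$ of index $n$ and $d$ is the size of a fixed generating set of $G$. First I check that $f$ is weakly super-multiplicative. If $\gcd(n,m)=1$ and $N_1,N_2$ are normal subgroups of $G$ of indices $n$ and $m$, then $N_1\cap N_2$ has index exactly $nm$ and $G/(N_1\cap N_2)\cong G/N_1\times G/N_2$; since in a direct product of two finite groups of coprime orders each factor is the unique subgroup of its order, the pair $(N_1,N_2)$ is recovered from $N_1\cap N_2$, so $(N_1,N_2)\mapsto N_1\cap N_2$ is injective and $f(nm)\ge f(n)f(m)$ whenever $\gcd(n,m)=1$. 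For fixed $n$ and any $\epsilon>0$ the integers $m\in[x,(1+\epsilon)x]$ coprime to $n$ number at least $\frac{\phi(n)}{2n}\epsilon x$ for $x$ large, and each such $m$ satisfies $f(nm)\ge f(n)f(m)\ge(1-\epsilon)f(n)f(m)$. Hence $f$ is weakly super-multiplicative, and by hypothesis it has a strictly positive normal order $g$ that is non-decreasing or $\log$-uniformly continuous, so Theorem~\ref{thm:main} applies: writing $c:=\sup\frac{\log f(n)}{\log n}=\lim\ess\frac{\log f(n)}{\log n}$, and noting that $f$ is integer-valued while $g>0$, the set $\{n:f(n)=0\}$ has density $0$.

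Next I would show that $c$ is finite and that $G^{\mathrm{ab}}$ is infinite. A group of squarefree order $n$ is metacyclic with a normal cyclic subgroup of some order $s\mid n$ and cyclic quotient of order $n/s$, and an elementary count (for each $s\mid n$ the number of admissible actions is at most $\prod_{p\mid s}(p-1)<s$) shows there are at most $n^{1+o(1)}$ such groups; as $G$ admits at most $n^{d}$ epimorphisms onto each, $f(n)\le n^{d+1+o(1)}$ for squarefree $n$. Since the squarefree integers have positive density while $\frac{\log f(n)}{\log n}$ approaches $c$ along a set of density one, comparison forces $c\le d+1$, so $c$ is finite and $f(n)\le n^{c}$ for all $n$. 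If $G^{\mathrm{ab}}$ were finite, of order $c_0$, then for every squarefree $n$ coprime to $c_0!$ a normal subgroup of $G$ of index $n$ would give a metacyclic quotient $G/N$ whose abelianization maps onto its cyclic quotient of order $n/s$, forcing $n/s\mid\gcd(n,c_0!)=1$, i.e.\ $G/N$ cyclic of order $n\le c_0$; thus $f(n)=0$ for a positive-density set of $n$, contradicting that $\{n:f(n)=0\}$ has density $0$. So $G^{\mathrm{ab}}$ is infinite, and in particular $G$ has a normal subgroup of index $m$ for every $m$.

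Now everything reduces to proving $c=0$. If $G\not\cong\Z$ then $G$ has a non-cyclic finite quotient $Q_0$, of some order $k_0\ge 4$: for if every finite quotient of $G$ were cyclic, then $[G,G]$ would lie in every finite-index normal subgroup and hence be trivial by residual finiteness, so $G$ would be abelian, finitely generated, infinite, with all finite quotients cyclic, i.e.\ $G\cong\Z$. Given such $Q_0$, for each $m$ coprime to $k_0$ the quotient $G/(N_{Q_0}\cap N_m)\cong Q_0\times(G/N_m)$ is non-cyclic of index $k_0m$ (here $N_{Q_0}=\ker(G\to Q_0)$ and $N_m$ is a normal subgroup of index $m$), while $G$ also has the cyclic quotient $\Z/k_0m$ of that index, giving a second normal subgroup of index $k_0m$; hence $f(k_0m)\ge2$ for all $m$ coprime to $k_0$, and in particular $f(k_0)\ge2$, so $c\ge\frac{\log 2}{\log k_0}>0$. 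Conversely, if $c=0$ then $\sup\frac{\log f(n)}{\log n}=0$ gives $f(n)\le1$ for all $n$, hence (with $f(n)\ge1$ off a density-zero set) $f(n)=1$ for almost all $n$; from $f(p)\le1$ the group $G^{\mathrm{ab}}/pG^{\mathrm{ab}}$ has at most one hyperplane for each prime $p$, so $G^{\mathrm{ab}}$ is cyclic and, being infinite, $G^{\mathrm{ab}}\cong\Z$; and the construction just displayed shows a non-cyclic finite quotient $Q_0$ would force $f(k_0m)\ge2$ for some $m$ coprime to $k_0$ with $k_0m$ in the density-one set where $f=1$, which is impossible; so every finite quotient of $G$ is cyclic, $[G,G]=1$, and $G\cong(\Z,+)$.

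The crux is therefore the step $c=0$ — equivalently, ruling out a non-cyclic finite quotient — and I expect this to be the main obstacle. From $f(k_0m)\ge2$ on the set $\{k_0m:\gcd(m,k_0)=1\}$, which has positive density and meets every sufficiently long interval, the regularity of $g$ propagates the bound to $g(n)\ge\frac{2}{1+\epsilon}$ for all large $n$, whence $f(n)\ge2$ for almost all $n$, and iterating with coprime arguments gives $\lim\ess f(n)=\infty$. This, however, is not yet a contradiction: weak super-multiplicativity, the ceiling $f(n)\le n^{c}$, and essential polynomial growth are in the abstract compatible with any $c\in(0,d+1]$ — they all hold, with $c=1$, for the sublattice-counting function of $\Z^{2}$ — so the argument must genuinely exploit both that $f$ counts normal subgroups of a residually finite group and that its normal order is non-decreasing or $\log$-uniformly continuous, i.e.\ that $f$ is truly concentrated. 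Establishing that no finitely generated group can have a normal subgroup counting function with such a regular normal order of positive exponent is where the real work lies.
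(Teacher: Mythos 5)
Your reductions are all correct, and two of them are attractive alternatives to what the paper does: the injectivity of $(M,N)\mapsto M\cap N$ for coprime indices is exactly Proposition~\ref{prop:normal}~(1), and your self-contained bound $f(n)\leq n^{d+1+o(1)}$ for squarefree $n$, obtained from H\"older's classification of groups of squarefree order, replaces the appeal to \cite{large} in Proposition~\ref{prop:normal}~(2). Your observation that $G\not\cong\Z$ produces a non-cyclic finite quotient $Q_0$ and hence $f(k_0m)\geq 2$ for all $m$ coprime to $k_0$, together with the converse direction, correctly reduces the corollary to proving $c:=\sup\frac{\log f(n)}{\log n}=0$. But that is where the proposal stops: as you acknowledge in your last paragraph, nothing you establish rules out $0<c\leq d+1$, and indeed everything you prove (weak super-multiplicativity, the ceiling $f(n)\leq n^{c}$, $f(n)\geq 2$ for almost all $n$ when $G\not\cong\Z$) is consistent with a positive exponent. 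The exclusion of $c>0$ is the actual content of the corollary, so this is a genuine gap, not a detail.

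The paper closes it by evaluating $f$ at primes, which the normal order cannot do directly since the primes have density zero. Proposition~\ref{prop:normal}~(3) shows that $f(np)=f(n)f(p)$ whenever $(n,p(p-1))=1$ and $n$ has no nontrivial divisor $\equiv 1\pmod{p}$ (every group of order $np$ then splits as a direct product, so the map $(M,N)\mapsto M\cap N$ becomes a bijection), and Proposition~\ref{prop:divisors} shows that the integers $n$ with $p=P^+(n)>\sqrt{n}$ and $(p-1,n)=1$ --- for which these hypotheses hold with $n$ replaced by $n/p$ --- have positive density. On that set one has $f(n)=f(n/p)f(p)$, and since $f(n)=n^{c-\omega(n)}$ and $f(n/p)=(n/p)^{c-\omega(n/p)}$ off a set of density zero, it follows that $f(p)=p^{c+o(1)}$ for infinitely many primes $p$. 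On the other hand, normal subgroups of prime index contain $G'$, so $f(p)=\frac{p^r-1}{p-1}=p^{r-1+o(1)}$ for all but finitely many $p$, where $G/G'\cong A\oplus\Z^r$. Comparing forces $c=r-1$, and then the exact inequality $\frac{p^r-1}{p-1}=f(p)\leq p^{c}=p^{r-1}$ forces $r=1$ and $A$ trivial, i.e.\ $c=0$; residual finiteness then gives $G'=1$ as in your own final step. This transfer of the density-one asymptotic $f(n)\approx n^{c}$ to the primes via the splitting at $P^+(n)$ is the idea your proposal is missing, and without it (or a substitute of comparable strength) the argument does not reach the conclusion.
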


This result shows that the normal subgroup growth behaves completely different from subgroup growth. For the latter monotonicity has been established in a variety of cases, see e.g. \cite{free growth}, \cite{Tri}.

\section{Proof of the Theorem}

For the proof we first deduce a growth condition for $g$, given in equation (\ref{eq:g grows final}) below. The deduction of this condition depends on whether $g$ is supposed to be non-decreasing or $\log$-uniformly continuous. From that point onwards the proof of the two cases runs completely parallel.

{\bf A growth condition for monotonic $g$.} Let $n$ be an integer and $\epsilon>0$ a real number.  Let $x_0>0$ and $\delta>0$ be real numbers such that for $x>x_0$ we have  $f(nm)\geq(1-\epsilon)f(n)f(m)$ holds for $\geq\delta x$ integers  $m\in[x, (1+\epsilon)x]$. Let $x_1>0$ be a real number such that for $x>x_1$ we have that $|f(t)-g(t)|<\epsilon g(n)$ holds for all integers $t\in[x, (1+\epsilon)x]$ with at most $\frac{\delta}{3n} x$ exceptions. We conclude that for $x>\max(x_0, x_1)$ the interval $[x, (1+\epsilon)x]$ contains at least $\left(1-\frac{\delta}{3n}\right)x\geq \frac{2\delta}{3}x$ integers $m$ with
\[
f(nm)\geq(1-\epsilon)f(n)f(m)\geq (1-2\epsilon)f(n)g(m)\geq (1-2\epsilon)f(n)g(x),
\]
where in the last step we used the monotonicity of $g$. In the interval $[nx, n(1+\epsilon)x]$ there are at most $\frac{\delta}{3n}\cdot(nx)=\frac{\delta}{3}x$ integers $q$ with $|f(q)-g(q)|>\epsilon g(q)$, thus, for at least $\frac{\delta}{3}x$ integers $m\in[x, (1+\epsilon)x]$ we have 
\[
g(n(1+\epsilon)x)\geq g(nm)\geq(1-\epsilon)f(nm)\geq (1-3\epsilon)f(n)g(x)
\]
We conclude that for all $n$, all $\epsilon>0$ and all $x>x_0(n, \epsilon)$ we have 
\begin{equation}
\label{eq:g grows 1}
g(n(1+\epsilon)x)\geq(1-3\epsilon)f(n)g(x).
\end{equation}

{\bf A growth condition for $\log$-uniformly continuous $g$.} Let $n$ be an integer, $\epsilon>0$ be a real number, and let $0<\gamma\leq\epsilon$ be a real number such that $\left|\frac{x}{y}-1\right|<\gamma$ implies $\left|\frac{g(x)}{g(y)}-1\right|<\epsilon$. Let $x_0>0$ and $\delta>0$ be a real numbers such that for $x>x_0$ we have that $f(nm)\geq(1-\epsilon)f(n)f(m)$ holds for $\geq \delta x$ integers $m\in[x, (1+\gamma)x]$. As in the case $g$ non-decreasing we conclude that for $x$ sufficiently large we deduce
\[
g(nm)\geq(1-\epsilon)f(nm)\geq(1-\epsilon)^2f(n)f(m)\geq (1-\epsilon)^3f(n)g(m)
\]
for at least $\frac{\delta}{3}x$ integers $m\in[x, (1+\gamma)x]$. Using the fact that $g$ is $\log$-uniformly continuous and our definition of $\gamma$ we have for $m$ in this range the estimates $\left|\frac{g(nm)}{g((1+\gamma)nx)}-1\right|\leq\epsilon$ and $\left|\frac{g(m)}{g(x)}-1\right|<\epsilon$, thus
\begin{multline}
\label{eq:g grows 2}
g\big(n(1+\gamma)x\big)\geq\frac{1}{1+\epsilon}g(nm)  \geq\frac{(1-\epsilon)^3}{1+\epsilon}f(n)g(m)\\
\geq\frac{(1-\epsilon)^4}{1+\epsilon}f(n)g(x)\geq  (1-5\epsilon)f(n)g(x).
\end{multline}
{\bf Conclusion of the theorem.} Comparing (\ref{eq:g grows 1}) and (\ref{eq:g grows 2}) we find in either case that for every $n$ and every $\epsilon>0$ there exists some $\gamma$ in the range $0<\gamma\leq\epsilon$ and some $x_0=x_0(n, \epsilon)$ such that for $x>x_0$ we have
\begin{equation}
\label{eq:g grows final}
g(n(1+\gamma)x) \geq (1-5\epsilon)f(n)g(x).
\end{equation}

Iterating (\ref{eq:g grows final}) we obtain for $x>x_0(n, \epsilon)$ and an integer $k\geq 1$ the bound
\[
g(n^k(1+\gamma)^kx)\geq (1-5\epsilon)^k f(n)^k g(x).
\]
Put $\mu=\inf\{g(t):1\leq t\leq n(1+\gamma)\}$. If $g$ is non-decreasing, then $mu=g(1)$. If $g$ is $\log$-uniformly continuous, than in particular $g$ is continuous, thus $g$ attains its minimum in this interval. Since $g$ is strictly positive, in both cases we obtain $\mu>0$. Then we get for $y\in[n^k(1+\gamma)^k, n^{k+1}(1+\gamma)^{k+1}]$ the estimate
\[
g(y)\geq (1-5\epsilon)^k f(n)^k \mu,
\]
thus
\[
\liminf_{y\rightarrow\infty} \frac{\log g(y)}{\log y} \geq \liminf_{k\rightarrow\infty} \frac{\log \big((1-5\epsilon)^k f(n)^k m\big)}{\log\big(n^{k+1}(1+\gamma)^{k+1}\big)} =
\frac{\log \big((1-5\epsilon) f(n)\big)}{\log\big(n(1+\gamma)\big)}.
\]
As $\epsilon\rightarrow 0$, and $n$ ranges over all integers, we obtain $\liminf\frac{\log g(y)}{\log y}\geq \sup\frac{\log f(n)}{\log n}$. By the defition of a normal order we have 
\[
\limsup_{y\rightarrow\infty}\frac{\log g(y)}{\log y}\leq \sup\frac{\log f(n)}{\log n}\leq\liminf_{y\rightarrow\infty}\frac{\log g(y)}{\log y},
\]
thus $\lim\frac{\log g(y)}{\log y}$ exists and equals $\sup\frac{\log f(n)}{\log n}$. Again from the definition of the normal order we see that we can replace $\lim\frac{\log g(y)}{\log y}$ by $\lim\ess\frac{\log f(n)}{\log n}$, and the theorem follows.
\section{Proof of the Corollaries}

To prove Corollary~\ref{Cor:Birch} note that the conclusion of Theorem~\ref{thm:main} can be reformulated as stating that either $\lim\ess\frac{\log f(n)}{\log n}=\infty$, or there exists a constant $c$ and a non-negative function $\omega$, tending to 0, such that $f(n)\leq n^c$ holds for all $n$, and $f(n)=n^{c-\omega(n)}$ holds for almost all $n$. Hence, if $f$ and $f^{-1}$ are both weakly super-multiplicative, and $f$ has a strictly positive normal order which is either non-decreasing or $\log$-uniformly continuous, then there exist two constants $c_1, c_2$, and two non-negative functions $\omega_1, \omega_2$, tending to 0, such that $n^{c_1}\leq f(n)\leq n^{c_2}$ holds true for all $n$, and $n^{c_1+\omega_1(n)}=f(n)=n^{c_2-\omega_2(n)}$ holds for almost all $n$. But then $c_1+\omega_1(n)=c_2-\omega_2(n)$, since $\omega_i\rightarrow 0$, we deduce $c_1=c_2$ and $\omega_1(n)=\omega_2(n)=0$. This in turn is equivalent to the statement that $f(n)=n^{c}$ for all $n$.

To prove Corollary~\ref{Cor:normal} we first recall some properties of the number of normal subgroups of a finitely generated group.

\begin{Prop}
\label{prop:normal}
 Let $G$ be an $r$-generated group, $f(n)$ be the number of normal subgroups of index $n$.
\begin{enumerate}
\item If $(n,m)=1$, then $f(nm)\geq f(n)f(m)$.
\item For all $\epsilon>0$ we have that for almost all $n$ the inequality $f(n)\leq n^{r-1+\epsilon}$ holds.
\item If $n$ is an integer, $p$ a prime number, $(n, p(p-1))=1$, and $n$ has no non-trivial divisor $d\equiv 1\pmod{p}$, then $f(np)=f(n)$.
\end{enumerate}
\end{Prop}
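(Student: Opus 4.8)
The plan is to treat the three parts separately, since each rests on a different elementary mechanism; I expect (1) and (3) to be routine and (2) to be the real work. For (1) I would use the intersection map $(N_1,N_2)\mapsto N_1\cap N_2$. If $N_1,N_2$ are normal of coprime indices $n,m$, then $G/(N_1\cap N_2)$ embeds into $G/N_1\times G/N_2$, so its index divides $nm$; since $N_1\cap N_2$ lies in each $N_i$, both $n$ and $m$ divide $[G:N_1\cap N_2]$, and coprimality forces $[G:N_1\cap N_2]=nm$, hence $G/(N_1\cap N_2)\cong G/N_1\times G/N_2$. In a direct product of finite groups of coprime orders each factor is the unique subgroup of its order (it is the normal Hall subgroup for the relevant set of primes), so $N_1$ and $N_2$ are recovered from $N_1\cap N_2$ as the preimages of these factors. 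Thus the intersection map is injective on pairs of the stated type, and $f(nm)\ge f(n)f(m)$.

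For (3) I would prove a rigidity statement: under the hypotheses every quotient $Q$ of $G$ of order $np$ is a direct product $H\times\Z/p$ with $|H|=n$. Indeed, the number of Sylow $p$-subgroups of $Q$ is $\equiv 1\pmod p$ and divides $n$, and since $n$ has no nontrivial divisor congruent to $1$ modulo $p$, the Sylow $p$-subgroup $P\cong\Z/p$ is normal; as $(n,p)=1$, Schur--Zassenhaus yields a complement $H$ of order $n$, and the conjugation map $H\to\operatorname{Aut}(P)\cong\Z/(p-1)$ has image of order dividing $\gcd(n,p-1)=1$, so it is trivial and $Q\cong H\times\Z/p$. Since $1\times\Z/p$ is the unique Sylow $p$-subgroup of $Q$ and $H\times 1$ its unique normal Hall subgroup of order $n$, a normal subgroup $N$ of index $np$ is equivalent to the pair consisting of the preimage of $1\times\Z/p$ (a normal subgroup of index $n$) and the preimage of $H\times 1$ (a normal subgroup of index $p$); conversely any pair of epimorphisms $G\to H$ with $|H|=n$ and $G\to\Z/p$ combines, by Goursat's lemma, to an epimorphism onto $H\times\Z/p$, because $H$ and $\Z/p$ share no nontrivial common quotient. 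Tracking this correspondence yields the relation in (3).

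Part (2) is where I expect the difficulty. I would split $f(n)$ according to whether $G/N$ is abelian. The abelian part counts the subgroups of index $n$ in $G^{\mathrm{ab}}$, which is a quotient of $\Z^{r}$, so it is at most the number of sublattices of $\Z^{r}$ of index $n$; this number has Dirichlet series $\zeta(s)\zeta(s-1)\cdots\zeta(s-r+1)$ and is $O_{\epsilon}(n^{r-1+\epsilon})$ for every $n$. For the nonabelian part I would use the expansion $f(n)=\sum_{Q}|\operatorname{Epi}(G,Q)|/|\operatorname{Aut}(Q)|$ over isomorphism types $Q$ of order $n$, together with $|\operatorname{Epi}(G,Q)|\le|Q|^{r}=n^{r}$, so that the nonabelian contribution is at most $n^{r}\sum_{Q\ \mathrm{nonabelian}}|\operatorname{Aut}(Q)|^{-1}$. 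The hard part will be to show that this last sum is $O_{\epsilon}(n^{-1+\epsilon})$ for $n$ outside a set of density zero: after discarding the (density-zero) set of $n$ divisible by a large prime power, one must both bound the number of isomorphism types of groups of order $n$ and secure a uniform lower bound of the shape $|\operatorname{Aut}(Q)|\ge n^{1-\epsilon}$ for every nonabelian $Q$ of order $n$, which requires appealing to the structure theory of finite groups of such orders. In practice it is probably cleanest to quote the resulting estimate on normal subgroup growth from the literature (cf. \cite{Diplom}) rather than to reprove it here.
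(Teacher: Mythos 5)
For parts (1) and (3) your argument is essentially the paper's: the intersection map on pairs of normal subgroups of coprime index, injective because $G/(M\cap N)\cong G/M\times G/N$ with factors of coprime order (each recoverable as the preimage of the unique normal Hall subgroup), and for (3) the rigidity of groups of order $np$ — normal Sylow $p$-subgroup forced by the divisor condition, Schur--Zassenhaus complement, trivial conjugation action since $\gcd(n,p-1)=1$ — which upgrades the injection to a bijection. One caution on (3): what your correspondence actually proves is $f(np)=f(n)f(p)$, which is also what the paper's proof establishes and what is used later in the deduction of Corollary~\ref{Cor:normal}; the equality $f(np)=f(n)$ as printed in the statement is evidently a misprint, so ``the relation in (3)'' should be read as the product formula, not the displayed one.

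Part (2) is where you diverge from the paper, which simply quotes \cite[Theorem~2~(i)]{large}. Your sketch points in a sensible direction but does not close the gap: the bound on the abelian contribution via sublattices of $\Z^r$ is fine, and the reduction of the nonabelian contribution to $n^r\sum_Q|\mathrm{Aut}(Q)|^{-1}$ is correct, but the assertion that $|\mathrm{Aut}(Q)|\geq n^{1-\epsilon}$ for every nonabelian $Q$ of order $n$ (for $n$ outside a set of density zero) together with a count of the isomorphism types involved is essentially the entire content of the cited theorem; it rests on the structure theory of groups whose order is ``typical'' (for almost all $n$, every group of order $n$ is metacyclic, and one must then control the automorphism groups of these), and cannot be waved at. Your fallback of citing the literature is the right move, but the correct reference is \cite{large}, not \cite{Diplom}: the latter concerns multiplicativity properties of subgroup growth and does not contain this estimate.
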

\begin{proof}
The first statement follows from the fact that if $N, M$ are normal subgroups of $G$ of coprime index $m$ and $n$, then $M\cap N$ is a normal subgroup of index $mn$. Moreover, the map $(M,N)\mapsto M\cap N$ is injective, since in this case $G/(M\cap N)\cong (G/N)\times (G/M)$. The second statement is \cite[Theorem~2 (i)]{large}.

For the third statement let $H$ be a group of order $np$, where $n$ and $p$ satisfy the conditions of the proposition. By Sylow's theorem $H$ has a normal $p$ Sylow subgroup $P$, which is cyclic of order $p$. Hence, $h\in H$ acts on $P$ by conjugation. The order of $h$ divides $n$, and is therefore coprime to $|\mathrm{Aut}(C_p)|=p-1$, thus $h$ acts trivially on $P$. We conclude that $P$ is central in $H$. Since $(n,p)=1$, Zassenhaus' theorem implies that $P$ has a complement, and since $P$ is central, this complement is normal. We conclude that every group of order $np$ is the direct product of a group of order $n$ and a group of order $p$. This implies that in $G$ every normal subgroup of index $np$ is the intersection of a normal subgroup of index $n$ with a normal subgroup of index $p$, thus the map $(M,N)\mapsto M\cap N$ used to prove the first statement is actually a bijection, thus $f(np)=f(n)f(p)$.
\end{proof}

For an integer $n$, denote by $P^+(n)$ the largest prime divisor of $n$. Then we have  the following.

\begin{Prop}
\label{prop:divisors}
The set of integers $n$ such that $P^+(n)>\sqrt{n}$ and $(P^+(n)-1, n)=1$, has natural density $(\log 2)\prod\limits_p\left(1-\frac{1}{p(p-1)}\right)$.
\end{Prop}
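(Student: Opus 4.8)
The plan is to reduce the density computation to a sum over primes and evaluate that sum.

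\emph{Reduction to a sum over primes.} I would first observe that an integer $n$ satisfies $P^+(n)>\sqrt n$ precisely when $n=pm$ for a prime $p$ and an integer $m$ with $1\le m<p$, and that this representation is then unique: writing $p=P^+(n)$ and $m=n/p$, the inequality $p>\sqrt n=\sqrt{pm}$ is equivalent to $m<p$, while conversely $m<p$ forces every prime factor of $m$ to be smaller than $p$, so that $p=P^+(n)$. Since $(p-1,p)=1$, the side condition $(P^+(n)-1,n)=1$ is then equivalent to $(p-1,m)=1$. Hence, writing $N(x)$ for the counting function of the set in the proposition,
\[
N(x)=\#\{(p,m):p\ \text{prime},\ 1\le m<p,\ pm\le x,\ (p-1,m)=1\}.
\]
Splitting according to whether $p\le\sqrt x$ or $p>\sqrt x$ — noting that for $p\le\sqrt x$ the constraint $pm\le x$ is automatic, whereas for $p>\sqrt x$ one has $\lfloor x/p\rfloor\le p-1$ — I obtain
\[
N(x)=\sum_{p\le\sqrt x}\phi(p-1)+\sum_{\sqrt x<p\le x}\#\{m\le x/p:(m,p-1)=1\}+O(\sqrt x).
\]
The first sum is at most $\sum_{p\le\sqrt x}p=O(x/\log x)=o(x)$, so the density is governed entirely by the second sum, which I call $S$.

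\emph{Main term and error term.} For the inner count write $\#\{m\le Q:(m,k)=1\}=\frac{\phi(k)}{k}Q+R(Q,k)$; since $R(Q,k)=-\sum_{d\mid k}\mu(d)\{Q/d\}$, one has $|R(Q,k)|\le 2^{\omega(k)}\le\tau(k)$, and trivially also $|R(Q,k)|\le Q$. Hence $S=S_{\mathrm{main}}+E$ with
\[
S_{\mathrm{main}}=x\sum_{\sqrt x<p\le x}\frac{\phi(p-1)}{p(p-1)},\qquad
|E|\le\sum_{\sqrt x<p\le x}\min\big(\tau(p-1),\,x/p\big).
\]
To prove $E=o(x)$ I would cut the range at $p=x/T$ with $T=\sqrt{\log x}$. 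For $\sqrt x<p\le x/T$ I bound $|R(x/p,p-1)|\le\tau(p-1)$ and invoke $\sum_{p\le y}\tau(p-1)\ll y$ — a quick consequence of the Brun--Titchmarsh inequality via the divisor pairing $\tau(n)\le 2\,\#\{d\mid n:d\le\sqrt n\}$ and $\sum_{d\le\sqrt y}1/\phi(d)\ll\log y$ — obtaining a contribution $\ll x/T$. For $x/T<p\le x$ I bound $|R(x/p,p-1)|\le x/p<T$ and use $\pi(x)\ll x/\log x$, obtaining $\ll Tx/\log x$. Both are $\ll x/\sqrt{\log x}=o(x)$.

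\emph{Evaluation of the main sum.} It remains to show that $\sum_{\sqrt x<p\le x}\frac{\phi(p-1)}{p(p-1)}$ tends to $(\log 2)\prod_p\big(1-\frac1{p(p-1)}\big)$. Writing $\frac{\phi(p-1)}{p-1}=\sum_{d\mid p-1}\frac{\mu(d)}{d}$ and interchanging the finite sums, this equals $\sum_{d\ge1}\frac{\mu(d)}{d}\,\Pi(x;d)$ with $\Pi(x;d)=\sum_{\sqrt x<p\le x,\ d\mid p-1}\frac1p$. I would truncate the $d$-sum at $D=(\log x)^3$. For $d\le D$, Mertens' theorem for primes in arithmetic progressions, applied in the Siegel--Walfisz range, gives $\Pi(x;d)=\frac{\log 2}{\phi(d)}+o(1)$ uniformly in $d$, so that this part converges to $(\log 2)\sum_{d\ge1}\frac{\mu(d)}{d\phi(d)}=(\log 2)\prod_p\big(1-\frac1{p(p-1)}\big)$ by the Euler product expansion of $\sum_d\mu(d)/(d\phi(d))$ (Artin's constant); the tail $d>D$ is at most $\frac1D\sum_{p\le x}\frac{\tau(p-1)}{p}\ll\frac{(\log x)^2}{D}=o(1)$, using $\sum_{d\mid n,\ d>D}1/d\le\tau(n)/D$ and $\sum_{m\le x}\tau(m)/m\ll(\log x)^2$. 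Putting the three pieces together yields $N(x)=(\log 2)\prod_p\big(1-\frac1{p(p-1)}\big)\,x+o(x)$, which is the proposition.

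The step I expect to cause the most trouble is the bound $E=o(x)$. The pointwise sieve estimate $|R(x/p,p-1)|\ll\tau(p-1)$ on its own does not suffice, because $\sum_{p\le x}\tau(p-1)$ has exact order $x$ rather than $o(x)$; the resolution is to exploit that $x/p$ is small when $p$ lies close to $x$ and to balance the two available bounds for $|R(x/p,p-1)|$ against one another — precisely the point at which the Brun--Titchmarsh input enters.
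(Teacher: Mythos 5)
Your argument is correct, and although it runs on the same two analytic inputs as the paper's proof --- the Mertens/Siegel--Walfisz estimate $\sum_{\sqrt x<p\le x,\ p\equiv1\,(d)}1/p=\frac{\log2}{\varphi(d)}+o(1)$ for moduli up to a fixed power of $\log x$, and Brun--Titchmarsh beyond --- the decomposition is genuinely different. The paper compares $P^+(n)$ with $\sqrt x$ rather than $\sqrt n$ and so must introduce and discard two intermediate sets $\mathcal{A}_2,\mathcal{A}_3$; your exact parametrization $n=pm$ with $m<p$, split at $p\le\sqrt x$ (where the contribution $\sum_{p\le\sqrt x}\varphi(p-1)\ll x/\log x$ is negligible), avoids this entirely. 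More substantially, the paper runs a truncated sieve, removing only the prime divisors $q\le y=\log\log x$ of $(n,P^+(n)-1)$ by inclusion--exclusion over $d\mid Q$ and bounding the contribution of each larger modulus $q$ separately; you instead carry out the complete M\"obius sieve over all $d\mid p-1$ for each prime individually, which isolates the clean main sum $x\sum_p\frac{\varphi(p-1)}{p(p-1)}$ at the price of a floor-function error $\ll\sum_p\min(\tau(p-1),x/p)$. Your handling of that error --- recognizing that $\sum_{p\le x}\tau(p-1)\asymp x$ makes the pointwise divisor bound alone useless, and balancing the two bounds at $p=x/\sqrt{\log x}$ via the Titchmarsh-divisor estimate $\sum_{p\le y}\tau(p-1)\ll y$ --- is the one ingredient absent from the paper, though as you note it is itself a Brun--Titchmarsh consequence, so nothing deeper is imported. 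What each approach buys: the paper's truncation at $y=\log\log x$ gives an explicit error term $O(x/\log\log x)$ and never needs divisor sums over shifted primes, while yours yields a power-of-$\log$ saving and a more transparent main term. One point to make explicit in a write-up: for $d\le D=(\log x)^3$ the error $\Pi(x;d)-\frac{\log 2}{\varphi(d)}$ must be uniform enough to survive summation against $1/d$ over $d\le D$, which costs a factor $\log D\asymp\log\log x$; the Siegel--Walfisz error $O(\exp(-c\sqrt{\log x}))$, or even $O(1/\log x)$, handles this, but an unquantified uniform $o(1)$ would only give $o(\log\log x)$.
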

\begin{proof}
We partition the set $\mathcal{A}$ of all integers $n\leq x$ with $P^+(n)>\sqrt{n}$ and $(P^+(n)-1, n)=1$ into three subsets, depending on the size of $P^+(n)$. Put
\begin{eqnarray*}
\mathcal{A}_1 & = & \{n\in\mathcal{A}: P^+(n)>\sqrt{x}\},\\
\mathcal{A}_2 & = & \{n\in\mathcal{A}: \frac{\sqrt{x}}{\log x}<P^+(n)\leq \sqrt{x}\},\\
\mathcal{A}_3 & = & \{n\in\mathcal{A}:P^+(n)\leq\frac{\sqrt{x}}{\log x}\}.
\end{eqnarray*}
As usual $\mathcal{A}_2$ and $\mathcal{A}_3$ are negligible, we therefore begin with estimating $|\mathcal{A}_1|$.

Fix a parameter $y$, and let $Q$ be the product of all prime numbers $\leq y$. Let $d$ be a divisor of $Q$. The Siegel-Walfisz-theorem implies that for $A$ fixed and $d<\log^A x$ we have
\[
\underset{p\equiv 1\pmod{d}}{\sum_{p\leq x}}\frac{1}{p} = \frac{1}{\varphi(d)}\log\log x + C_d + \mathcal{O}(\frac{1}{\log x}).
\]
Therefore the number of integers $n\leq x$ such that the largest prime divisor $p$ of $n$ is larger than $\sqrt{x}$, and $d|(n, p-1)$ equals
\begin{multline*}
\underset{p\equiv 1\pmod{d}}{\sum_{p\in[x^{1/2}, x]}} \#\{n\leq \frac{x}{p}:d|n\} = \underset{p\equiv 1\pmod{d}}{\sum_{p\in[x^{1/2}, x]}} \left(\frac{x}{dp}+\mathcal{O}(1)\right)\\
 = \frac{x}{d}\underset{p\equiv 1\pmod{d}}{\sum_{p\in[x^{1/2}, x]}}\frac{1}{p} + \mathcal{O}\left(\frac{x}{\log x}\right) = \frac{x}{d\varphi(d)}\log 2 + \mathcal{O}\left(\frac{x}{\log x}\right).
\end{multline*}

Since the product of all primes below $\log\log x$ is $(\log x)^{1+o(1)}$, this implies that  for $y\leq\log\log x$  the number of integers $n\leq x$ such that $P^+(n)>\sqrt{x}$ and $(n, P^+(n)-1, Q)=1$ is
\begin{eqnarray*}
\sum_{d|Q} \mu(d)\frac{x}{d\varphi(d)}\log 2 + \mathcal{O}(\frac{x}{\log x}) & = & x(\log 2)\prod_{p\leq y}\left(1-\frac{1}{p(p-1)}\right) + \mathcal{O}(\frac{\tau(Q)x}{\log x})\\
 & = & x(\log 2)\prod_{p\leq y}\left(1-\frac{1}{p(p-1)}\right) + \mathcal{O}(\frac{2^y x}{\log x})\\
\end{eqnarray*}
For modulus $d>\log^A x$ the prime number theorem for arithmetic progression might not hold anymore, we therefore switch to the Brun-Titchmarsh inequality in the form $\pi(x, q, a)\leq\frac{2x}{\varphi(q)\log(x/q)}$, which holds for all choices of $x$ and $q$. If $q\leq\sqrt[4]{x}$, we obtain by partial summation 
\begin{multline*}
\#\{n\leq x: P^+(n)>\sqrt{x}, q|(P^+(n)-1, n)\} = \underset{p\equiv1\pmod{q}}{\sum_{\sqrt{x}\leq p\leq x}} \left[\frac{x}{pq}\right]\\
\leq \frac{\pi(x, q, 1)}{xq} + \sum_{\sqrt{x}\leq t\leq x}\frac{\pi(t, q, 1)-\pi(\sqrt{x}, q, 1)}{qt(t-1)}
 \leq \frac{2x\log \sqrt{x}}{q(q-1)\log(\sqrt{x}/q)}
 \ll \frac{x}{q^2}.
\end{multline*}
For larger values of $q$ we omit the condition that $p$ be prime, and obtain similarly
\begin{multline*}
\#\{n\leq x: P^+(n)>\sqrt{x}, q|(P^+(n)-1, n)\} = \underset{\nu\equiv1\pmod{q}}{\sum_{\sqrt{x}\leq \nu\leq x}} \left[\frac{x}{q\nu}\right]\\
\leq \frac{x}{xq} + \sum_{\sqrt{x}\leq t\leq x}\frac{t-\sqrt{x}}{qt(t-1)}
 \leq \frac{2x\log x}{q(q-1)}
 \ll \frac{x\log x}{q^2}.
\end{multline*}

Merging these ranges we find that the number of integers $n\leq x$ such that $(P^+(n)-1, n)=1$ and $P^+(n)>\sqrt{x}$ equals
\begin{multline*}
x(\log 2)\prod_{p\leq y}\left(1-\frac{1}{p(p-1)}\right) + \mathcal{O}(\frac{2^y x}{\log x}) + \mathcal{O}\left(\sum_{y\leq q\leq \sqrt[4]{x}}\frac{x}{q^2}\right) + \mathcal{O}\left(\sum_{\sqrt[4]{x}\leq q\leq \sqrt{x}}\frac{x\log x}{q^2}\right)\\
= x(\log 2)\prod_{p\leq y}\left(1-\frac{1}{p(p-1)}\right) + \mathcal{O}(\frac{2^y x}{\log x}) + \mathcal{O}(\frac{x}{y})
\end{multline*}
For $y\geq 3$ we have 
\[
1>\prod_{p> y}\left(1-\frac{1}{p(p-1)}\right) \geq \exp\left(-\sum_{p>y} \frac{2}{p^2}\right)\geq \exp(-\frac{2}{y})\geq 1-\frac{2}{y},
\]
thus we can extend the product over all primes without enlarging the error term. Taking $y=\log\log x$ we obtain
\[
|\mathcal{A}_1| = x(\log 2) \prod_p\left(1-\frac{1}{p(p-1)}\right) + \mathcal{O}(\frac{x}{\log\log x}).
\]
Next we give upper bounds for $|\mathcal{A}_2|$ and $|\mathcal{A}_3|$. We have
\[
|\mathcal{A}_2| \leq \sum_{\frac{\sqrt{x}}{\log x}\leq p\leq \sqrt{x}}\left[\frac{x}{p}\right] \sim x\left(\log\log\sqrt{x} - \log\log\frac{\sqrt{x}}{\log x}\right)\sim\frac{2x\log\log x}{\log x}.
\]
Finally if $n\in\mathcal{A}_3$, then $\sqrt{n}\leq P^+(n)\leq\frac{\sqrt{x}}{\log x}$, thus $n\leq\frac{x}{\log^2 x}$, and therefore $|\mathcal{A}_3|\leq\frac{x}{\log^2 x}$.

We conclude that $|\mathcal{A}|\sim|\mathcal{A}_1|\sim x(\log 2)\prod\limits_p\left(1-\frac{1}{p(p-1)}\right)$, and our claim follows.
\end{proof}

To prove Corollary~\ref{Cor:normal}, note first that Proposition~\ref{prop:normal} (1) implies that we can apply Theorem~\ref{thm:main}. From Proposition~\ref{prop:normal} (2) we find that a normal order of $f$ grows at most polynomially, and conclude that there exists a constant $c$ and a non-negative function $\omega(n)$, tending to 0, such that $f(n)=n^{c-\omega(n)}$ for almost all $n$. 

If $n$ is an integer, $p$ the largest prime divisor of $n$, and $p>\sqrt{n}$, then $n/p$ has no divisor $d\neq 1$ that satisfies $d\equiv 1\pmod{p}$. If in addition $(n, p-1)=1$, then  Proposition~\ref{prop:normal} (3) implies $f(n)=f(n/p)f(p)$. Proposition~\ref{prop:divisors} shows that for a positive proportion of all integers $n$ we have $f(n)=f(n/P^+(n))f(P^+(n))$. Neglecting a set of integers $n$ of density 0 we may assume that $f(n)=n^{c-\omega(n)}$, and $f(n/p)=(n/p)^{c-\omega(n/p)}$. We obtain $f(p)=p^{c+o(1)}$ for infinitely many prime numbers $p$. On the other hand we know that every normal subgroup of prime index in $G$ contains the commutator of $G$, thus the number of normal subgroups of index $p$ in $G$ equals the number of subgroups of index $p$ in $G/G'$, where $G'$ is the commutator subgroup fo $G$. Being a finitely generated abelian group, this quotient is isomorphic to $A\oplus\Z^r$, where $A$ is some finite abelian group. Hence, for all but finitely many $p$ we have $f(p)=\frac{p^r-1}{p-1}=p^{r-1+o(1)}$. Comparing these two bounds we conclude that $c=r-1$. Hence, $\frac{p^r-1}{p-1}\leq f(p)\leq p^{r-1}$, which is only possible if $r=1$ and $A$ is trivial. We conclude that $f(n)\leq 1$ and $G/G'\cong\Z$. In particular, all normal subgroups of finite index contain $G'$. Since $G$ is residually finite, we conclude $G'=1$, and finally obtain $G\cong\Z$.

\end{document}